\definecolor{darkgreen}{rgb}{0,0.5,0}
\definecolor{darkblue}{rgb}{0,0,0.7}
\definecolor{darkred}{rgb}{0.9,0.1,0.1}
\newtheorem{proposition}{Proposition}
\newtheorem{theorem}[proposition]{Theorem}
\newtheorem{lemma}[proposition]{Lemma}
\theoremstyle{remark}
\newtheorem{example}[proposition]{Example}
\theoremstyle{definition}
\newtheorem{definition}[proposition]{Definition}
\numberwithin{equation}{section}
\numberwithin{proposition}{section}
\numberwithin{figure}{section}
\numberwithin{table}{section}
\newcommand{\N}{\mathbb{N}}
\newcommand{\R}{\mathbb{R}}
\renewcommand{\leq}{\leqslant}
\renewcommand{\geq}{\geqslant}
\renewcommand{\subset}{\subseteq}
\renewcommand{\d}{\mathrm{d}}
\newcommand{\mcl}{\mathcal}
\newcommand{\msf}{\mathsf}
\newcommand{\mfk}{\mathfrak}
\newenvironment{e}{\begin{equation}}{\end{equation}\ignorespacesafterend}
\newenvironment{e*}{\begin{equation*}}{\end{equation*}\ignorespacesafterend}
\begin{document}

\author{Victor Issa\,\orcidlink{0009-0009-1304-046X}}
\address[Victor Issa]{Department of Mathematics, ENS de Lyon, Lyon, France}
\email{\href{mailto:victor.issa@ens-lyon.fr}{victor.issa@ens-lyon.fr}}

\title[Weak-Strong Uniqueness Principle]{Weak-Strong Uniqueness Principle for Hamilton-Jacobi Equations}

\begin{abstract}
   We show that if a Hamilton-Jacobi equation admits a differentiable solution whose gradient is Lipschitz, then this solution is the unique semi-concave weak solution. Our result does not rely on any convexity (nor concavity) assumptions on the initial condition or the nonlinearity, and can therefore be utilized in contexts where the viscosity solution admits no standard variational representation.

   \bigskip

    \noindent \textsc{Keywords and phrases: Hamilton-Jacobi equations, viscosity solutions, weak-strong uniqueness}  

    \medskip

    \noindent \textsc{MSC 2020: 35F21, 35D30, 35D40} 
\end{abstract}

\maketitle

\newpage 

\newpage
\pagenumbering{arabic}
\section{Introduction}

We let $H : \R^D \to \R$ be a $\mcl C^2$ function and $\psi : \R^D \to \R$ be a Lipschitz and differentiable function whose gradient is Lipschitz. We are interested in partial differential equations of the form
\begin{e} \label{e.HJ}
    \begin{cases}
        \partial_t g - H(\nabla g) = 0 &\text{ on } (0,+\infty) \times \R^D \\
        g(0,\cdot) = \psi              &\text{ on } \R^D.
    \end{cases}
\end{e}
The equation \eqref{e.HJ} may not have any differentiable solution \cite[Section~3.2~Examples~5~\&~6]{evans}. But, by introducing the notion of viscosity solution it is possible to guarantee that \eqref{e.HJ} admits exactly one solution \cite[Section~10]{evans}. In the literature, many efforts have been devoted to identifying contexts in which the notion of viscosity solution coincides with other weaker notions of solution. It is for example well-known that when $H$ is convex, any semi-convex function which solves \eqref{e.HJ} almost everywhere is in fact equal to the viscosity solution \cite[Section~3.2 Theorem~7]{evans}.  It has also been recently proven that when $H$ is convex and the initial condition is regular enough, any semi-concave function which solves \eqref{e.HJ} almost everywhere is equal to the viscosity solution \cite{issa2024uniqueness}.

A particularly striking, but for the moment hypothetical, application of this type of result would be the identification of the limit free energy of some mean-field spin glass models via partial differential equations arguments. In this context, one studies the limiting value of a quantity associated to a fully connected and random ensemble of spins, called the limit free energy \cite{chenmourrat2023cavity,HJbook,pan}. The limit free energy depends on a time parameter $t \in \R_+$ and a space parameter $\msf q$ which belongs to a closed convex cone embedded in $L^2([0,1),\R^D)$. It is known that the limit free energy is a semi-concave function \cite[Proposition~3.8]{chenmourrat2023cavity} which satisfies an equation of the form \eqref{e.HJ} “almost everywhere” (outside a Gaussian null set) \cite[Theorem~2.6 \& Proposition~7.2]{chenmourrat2023cavity}. In this sense, the limit free energy is a semi-concave weak solution of an equation of the form \eqref{e.HJ} on a subset of an Hilbert space. To understand the behavior of the limit free energy, one wishes to assert that it is in fact the unique viscosity solution of the corresponding equation \cite[Conjecture~2.6]{mourrat2019parisi}. For models whose free energy solves an equation of the form \eqref{e.HJ} with a convex nonlinearity, the limit free energy has already been identified via probabilistic methods \cite{chenmourrat2023cavity, gue03, mourrat2020extending, pan, Tpaper}. Thus, the truly interesting regime for this kind of application is the one where $H$ is not assumed to be convex (nor concave). This highlights the need of establishing selection principles with no convexity assumptions on the nonlinearity.

One such a selection principle has been identified in \cite{chen2022statistical} and fruitfully used to identify the limit free energy of a family of statistical inference problems. But this result still relies on convexity assumptions for the initial condition $\psi$ which are not satisfied by the corresponding initial condition appearing in the context of spin glasses. In this paper, we show that when the viscosity solution of \eqref{e.HJ} is regular enough, \eqref{e.HJ} admits exactly one semi-concave weak solution. This establishes a selection principle which holds regardless of the convexity properties of $H$ and $\psi$.

We equip $\R^D$ with the standard scalar product $x \cdot y = \sum_{d = 1}^D x_d y_d$, and we denote by $|\cdot|$ the associated norm. 
\begin{theorem} \label{t.mainresult}
    We recall that $H : \R^D \to \R$ is a $\mcl C^2$ function and $\psi : \R^D \to \R$ is a Lipschitz differentiable function whose gradient is Lipschitz. Let $T > 0$, we  assume that the unique viscosity solution $g$ of \eqref{e.HJ} is differentiable on $(0,T) \times \R^D$, and that there exists $L \geq 0$ such that for every $t \in [0,T)$ and $x,y \in \R^D$,
    \begin{e*}
        |\nabla g(t,x) - \nabla g(t,y)| \leq L|x-y|. 
    \end{e*}
    In this case, there is a unique Lipschitz function $f : [0,T] \times \R^D \to \R$ satisfying the following properties.
    \begin{enumerate}
        \item \label{i.initial} $f(0,\cdot) = \psi.$
        \item \label{i.aesol}For almost every $(t,x) \in (0,T) \times \R^D$, $f$ is differentiable at $(t,x)$ and 
        \begin{e*}
        \partial_t f(t,x) - H(\nabla f(t,x)) = 0.
        \end{e*}
        \item \label{i.semiconcave} There exists $c > 0$ such that for every $t \in [0,T)$, $x \mapsto \frac{c}{2}|x|^2 - f(t,x)$ is convex.
    \end{enumerate}
   In addition, the unique Lipschitz function $f$ described above is the viscosity solution $g$.
\end{theorem}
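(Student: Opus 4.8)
\emph{Strategy.} The plan is to dispose of existence immediately and then concentrate on uniqueness, which is the heart of the matter. Existence is free: the viscosity solution $g$ itself satisfies \eqref{i.initial}--\eqref{i.semiconcave}. Indeed $g$ is Lipschitz, attains the datum $\psi$, solves the equation at every point of differentiability hence for almost every $(t,x)$, and the Lipschitz bound on $\nabla g$ gives $D^2 g \le L I$ in the sense of distributions, so $x \mapsto \frac{L}{2}|x|^2 - g(t,x)$ is convex. It therefore remains to prove that any Lipschitz $f$ satisfying \eqref{i.initial}--\eqref{i.semiconcave} coincides with $g$; this simultaneously gives that the unique such $f$ is $g$. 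Set $w = f - g$. It is Lipschitz, semi-concave (a difference of a semi-concave function and a $\mcl C^{1,1}$ function), vanishes at $t=0$, and, subtracting the two equations, satisfies for almost every $(t,x)$
\begin{e*}
\partial_t w = H(\nabla f) - H(\nabla g) = \bar{b} \cdot \nabla w + \mcl R, \qquad \bar{b} := \nabla H(\nabla g),
\end{e*}
where $\mcl R := H(\nabla f) - H(\nabla g) - \nabla H(\nabla g)\cdot(\nabla f - \nabla g)$ is the second-order Taylor remainder of $H$ on the segment $[\nabla g, \nabla f]$. The key structural point is that $\bar b$ is \emph{Lipschitz} and bounded (as $\nabla H$ is $\mcl C^1$ and $\nabla g$ is bounded and Lipschitz), while $\mcl R$ is quadratic in $\nabla w$ but has no pointwise sign, since $H$ is not assumed convex.

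\emph{The inequality $f \le g$ by duality against the strong solution.} Because $\bar b$ is Lipschitz, the ODE $\dot X = -\bar b(t,X)$ generates a bi-Lipschitz flow on $\R^D$ for $t \in [0,T)$; these are precisely the non-crossing characteristics of $g$. Fix $\tau \in (0,T)$ and $\phi \ge 0$ smooth with compact support, and let $\zeta \ge 0$ solve backward the adjoint transport equation $\partial_t \zeta - \mathrm{div}(\bar b\, \zeta) = 0$ with $\zeta(\tau,\cdot) = \phi$. Along the characteristics of $\partial_t - \bar b\cdot\nabla$, $\zeta$ obeys a linear ODE, so it stays nonnegative, remains bounded (as $\mathrm{div}\,\bar b$ is bounded), and stays compactly supported by finite speed of propagation. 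Testing the equation for $w$ against $\zeta$ and integrating by parts in space, the transport contribution built from $\bar b$ cancels by the choice of $\zeta$, and the boundary term at $t=0$ vanishes because $w(0,\cdot)=0$, leaving
\begin{e*}
\int_{\R^D} w(\tau,x)\,\phi(x)\,\d x = \int_0^\tau \int_{\R^D} \zeta(t,x)\, \mcl R(t,x)\, \d x\, \d t.
\end{e*}
Thus, $\phi \ge 0$ being arbitrary, $f \le g$ on $[0,T)\times\R^D$ will follow once we show the right-hand side is nonpositive.

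\emph{The inequality $g \le f$ by viscosity comparison.} This direction needs no nonconvex estimate. A semi-concave function has a nonempty subdifferential only at its points of differentiability: at a kink the superdifferential is a large convex set but the subdifferential is empty, so the supersolution test is there vacuous. At a point of differentiability, property \eqref{i.aesol} forces the equation to hold (using that $\nabla f$ is continuous at such a point and that the equation holds on a dense set), whence $\partial_t f - H(\nabla f) = 0 \ge 0$. A short argument along these lines shows that $f$ is a viscosity supersolution of \eqref{e.HJ}, \emph{independently of any convexity of $H$}. Since $g$ is the viscosity solution, hence a subsolution, and $f(0,\cdot) = g(0,\cdot) = \psi$, the comparison principle yields $g \le f$.

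\emph{The main obstacle.} Everything reduces to proving $\int_0^\tau\!\int \zeta\, \mcl R \le 0$, and this is exactly where the semi-concavity of $f$ must be turned against the indefiniteness of $D^2 H$. Writing $\mcl R = \int_0^1 (1-s)\, \nabla w^\tsp D^2 H(\nabla g + s\nabla w)\, \nabla w \, \d s$ and recalling $\nabla w = \nabla f - \nabla g$, I would integrate by parts in $x$ to transfer one spatial derivative off the quadratic form; since $\zeta \ge 0$ is regular and $\nabla f$ is of bounded variation (by semi-concavity), this produces a principal term in which the Hessian $D^2 f$ appears, controlled from above by $D^2 f \le cI$, plus lower-order terms bounded by the $\mcl C^{1,1}$ data ($D^2 g$, $\nabla \bar b$, $\nabla\zeta$) and the known Lipschitz bounds. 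The genuinely delicate points — giving meaning to the products of the matrix measure $D^2 f$ with bounded coefficients, justifying the integration by parts for a merely $BV$ gradient field, and verifying that the one-sided bound $D^2 f \le cI$ really dominates the sign-indefinite contributions of $D^2 H$ once the nonnegative weight $\zeta$ is in place — form the technical core of the argument; this is where I expect the real work to lie. Finite speed of propagation, from the boundedness of $\bar b$ and of $\nabla H$ on the compact set of attained gradients, confines all integrals to compact sets, so no decay of $w$ at infinity is required.
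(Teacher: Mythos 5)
There is a genuine gap, and it sits exactly where you flagged it. Your existence step and your transport structure are sound (the duality identity \(\int w(\tau)\phi = \int_0^\tau\!\int \zeta\,\mcl R\) is the Eulerian counterpart of what the paper does in Lagrangian form along the characteristics \(\dot X = \nabla H(\nabla g)(t,X)\)), but the plan to close the argument by showing \(\int_0^\tau\!\int\zeta\,\mcl R\le 0\) cannot work: the remainder \(\mcl R=\int_0^1(1-s)\,\nabla w^\tsp D^2H(\nabla g+s\nabla w)\nabla w\,\d s\) genuinely has no sign when \(H\) is nonconvex, and it will be strictly positive on sets of positive measure whenever \(D^2H>0\) along the relevant segment and \(\nabla w\neq 0\). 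The proposed integration by parts does not rescue this; moving a derivative off the quadratic form produces \(\tr\bigl(A\,D^2w\bigr)\)-type terms with \(A\) indefinite and \(D^2w\) only bounded above, so no one-sided estimate results. Your second direction (\(g\le f\) via the supersolution property of semi-concave a.e.\ solutions) also has an unaddressed subtlety: condition \eqref{i.semiconcave} gives semi-concavity in \(x\) only, for each fixed \(t\), whereas the emptiness of the subdifferential at non-differentiability points is needed for the joint \((t,x)\) variable.

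The mechanism that actually closes the proof requires no sign information at all, only the crude bound \(|\mcl R|\le c\,|\nabla w|^2\) coming from \(H\in\mcl C^2\) on the compact set of attained gradients. The missing ingredient is the interpolation inequality of Proposition~\ref{p.grad control}: a bounded function that is \(c\)-semi-concave (or \(c\)-semi-convex) satisfies \(\||\nabla h|\|_\infty^2\le 4c\,\|h\|_\infty\). Here \(u=f-g\) is semi-concave in \(x\) (because \(g(t,\cdot)\) is \(\mcl C^{1,1}\), hence both semi-concave and semi-convex, so subtracting it preserves semi-concavity of \(f\)) and \(u(t,\cdot)\) is bounded for each \(t\) (since \(u(0,\cdot)=0\) and \(u\) is Lipschitz in time). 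Feeding \(\||\nabla u(s,\cdot)|\|_\infty^2\le 4c\,\|u(s,\cdot)\|_\infty\) into the integrated bound \(\|u(t,\cdot)\|_\infty\le c\int_0^t\||\nabla u(s,\cdot)|\|_\infty^2\,\d s\) (obtained after composing with the flow and using that the flow maps preserve Lebesgue-null sets, so the a.e.\ statements survive) yields \(\|u(t,\cdot)\|_\infty\le c'\int_0^t\|u(s,\cdot)\|_\infty\,\d s\), and Gronwall gives \(u\equiv 0\) — both inequalities at once, with no case split and no viscosity comparison.
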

%
%
In condition \eqref{i.semiconcave} in Theorem~\ref{t.mainresult}, if the function $x \mapsto \frac{c}{2}|x|^2 - f(t,x)$ is replaced by the function $x \mapsto \frac{c}{2}|x|^2 + f(t,x)$ then the statement of Theorem~\ref{t.mainresult} remains true. This is because we can replace $(\psi,H)$ by $(-\psi, p\mapsto -H(-p))$ without affecting the hypotheses on $\psi$ and $H$.

We point out that when the nonlinearity $H$ is strongly convex (meaning that there exists $\theta > 0$ such that $p \mapsto H(p) - \theta |p|^2$ is convex), if the solution $g$ of \eqref{e.HJ} is differentiable on $(0,T') \times \R^D$, then for every $\varepsilon > 0$, $g$ satisfies the regularity assumption of Theorem~\ref{t.mainresult} with $T = T'-\varepsilon$ \cite[Theorem~15.1]{lions1982generalized}.

As shown in \cite[Section~6]{issa2024uniqueness}, when $g$ is not assumed to be differentiable with Lipschitz gradient there may be several Lipschitz functions satisfying \eqref{i.initial}, \eqref{i.aesol} and \eqref{i.semiconcave} simultaneously. We also point out that condition \eqref{i.semiconcave} is pivotal for the validity of Theorem~\ref{t.mainresult} as illustrated in the following example. 
\begin{example}
    Consider
        \begin{e*} 
        \begin{cases}
            \partial_t g + (\partial_x g)^2 = 0 &\text{ on } (0,+\infty) \times \R \\
            g(0,\cdot) = 0                      &\text{ on } \R.
        \end{cases}
    \end{e*}
    The viscosity solution of the equation above is the smooth function $g = 0$. The function
    \begin{e*}
        f(t,x) =    \begin{cases}
                        |x|-t &\text{ if } |x| \leq t \\
                        0      &\text{ otherwise},
                    \end{cases}
    \end{e*}
    is Lipschitz and satisfies conditions \eqref{i.initial} and \eqref{i.aesol} in Theorem~\ref{t.mainresult}, while $f \neq g$.
\end{example}
\section*{Acknowledgement}
The author is indebted to Alessio Figalli and Jean-Christophe Mourrat for helping in coming up with a sketch of the content depicted in Section~\ref{s.psi=0}. A significant part of this work was conceived and written during the 2024 Saint-Flour probability summer school, the author thanks the organizing committee (Hacene Djellout, Arnaud Guillin, Boris Nectoux) for making this workshop hospitable and stimulating. 
\section{Control on the gradient of semi-concave functions}
\begin{definition}
    Let $h : \R^D \to \R$ and $c > 0$, we say that $h$ is $c$-semi-convex on $\R^D$ when for every $x,y \in \R^D$ and $\lambda \in [0,1]$, we have 
    \begin{e*}
        h(\lambda x +(1-\lambda)y) \leq \lambda h(x) + (1-\lambda)h(y) + \frac{c}{2} \lambda(1-\lambda)|x-y|^2.
    \end{e*}
    We say that $h$ is $c$-semi-concave on $\R^D$ when $-h$ is $c$-semi-convex on $\R^D$.
\end{definition}
Note that by the parallelogram identity, a function $h$ is $c$-semi-concave if and only if $\frac{c}{2}|\cdot|^2-h$ is convex. 

We let $L^\infty(\R^D,\R)$ denote the set of essentially bounded functions $\R^D \to \R$, we equip it with the essential supremum norm,
\begin{e*}
    \|h \|_\infty = \text{ess-sup}_{x \in \R^D} |h(x)|.
\end{e*}
Sometimes, we will consider bounded functions defined on a subset $A \subset \R^D$, in this case we denote by $\|\cdot \|_{\infty,A}$ the corresponding norm. Given a Lipschitz function $h : \R^D \to \R$, $h$ is differentiable almost everywhere, according to Rademacher's theorem. We let $\nabla h$ denote the almost everywhere derivative of $h$, we have $|\nabla h| \in L^\infty(\R^D,\R)$.

\begin{proposition} \label{p.grad control}
     Let $h : \R^D \to \R$ be a bounded function, assume that there exists $c > 0$ such that $h$ is $c$-semi-concave or $c$-semi-convex. Then, $h$ is Lipschitz and 
     \begin{e*}
         \| |\nabla h| \|^2_\infty  \leq 4c \| h\|_\infty. 
     \end{e*}
     More precisely, at every point of differentiability $x \in \R^D$ of $h$ we have $|\nabla h(x)|^2 \leq 4c \| h\|_\infty$.
\end{proposition}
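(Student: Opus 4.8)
The plan is to reduce to the semi-concave case, extract from convexity a pointwise quadratic upper bound on $h$ at each point of differentiability, and then optimize over a one-parameter family of test points to obtain the sharp constant $4$.

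First I would reduce to the case where $h$ is $c$-semi-concave. If instead $h$ is $c$-semi-convex, then $-h$ is $c$-semi-concave, $|\nabla(-h)| = |\nabla h|$ wherever defined, and $\|-h\|_\infty = \|h\|_\infty$, so the two statements are equivalent. Assuming $h$ is $c$-semi-concave, the observation following the definition gives that $\phi := \frac{c}{2}|\cdot|^2 - h$ is convex. Being finite and convex on all of $\R^D$, $\phi$ is continuous and locally Lipschitz, hence so is $h = \frac{c}{2}|\cdot|^2 - \phi$. In particular $h$ is continuous, so its essential supremum equals its supremum and $|h(y)| \leq \|h\|_\infty$ for \emph{every} $y \in \R^D$, not merely almost every one; and by Rademacher's theorem $h$ is differentiable almost everywhere.

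Next I would record the quadratic upper bound. Fix a point $x$ at which $h$ is differentiable; then $\phi$ is differentiable there too, with $\nabla\phi(x) = cx - \nabla h(x)$. Since $\phi$ is convex, $\nabla\phi(x)$ is a subgradient at $x$, so $\phi(y) \geq \phi(x) + \nabla\phi(x)\cdot(y-x)$ for all $y$. Rearranging and using $|y|^2 - |x|^2 = |y-x|^2 + 2x\cdot(y-x)$ yields
\begin{e*}
    h(y) \leq h(x) + \nabla h(x)\cdot(y-x) + \frac{c}{2}|y-x|^2 \qquad \text{for all } y \in \R^D.
\end{e*}
The key step is then a one-parameter optimization. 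Assuming $\nabla h(x) \neq 0$ (otherwise there is nothing to prove), set $v = \nabla h(x)/|\nabla h(x)|$ and apply the bound at $y = x - tv$ with $t > 0$, giving $h(x - tv) \leq h(x) - t|\nabla h(x)| + \frac{c}{2}t^2$. Combining with $h(x-tv) \geq -\|h\|_\infty$ and $h(x) \leq \|h\|_\infty$ produces $t|\nabla h(x)| \leq 2\|h\|_\infty + \frac{c}{2}t^2$, that is,
\begin{e*}
    |\nabla h(x)| \leq \frac{2\|h\|_\infty}{t} + \frac{c}{2}t.
\end{e*}
Minimizing the right-hand side over $t > 0$, at $t = 2\sqrt{\|h\|_\infty/c}$, gives $|\nabla h(x)| \leq 2\sqrt{c\|h\|_\infty}$, i.e. $|\nabla h(x)|^2 \leq 4c\|h\|_\infty$, which is the pointwise claim.

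Finally I would upgrade this to global Lipschitz continuity: a locally Lipschitz function on $\R^D$ whose almost-everywhere gradient is bounded by $M$ is globally $M$-Lipschitz, so $h$ is $2\sqrt{c\|h\|_\infty}$-Lipschitz and $\| |\nabla h| \|_\infty^2 \leq 4c\|h\|_\infty$. The two points requiring care are ensuring the inequality holds pointwise rather than merely almost everywhere, which is why I invoke the continuity of $h$ to pass from $\text{ess-sup}$ to $\sup$, and selecting the correct direction (opposite to $\nabla h(x)$) together with the optimal step size $t$; this optimization is precisely where the sharp constant $4$ originates and is the crux of the argument.
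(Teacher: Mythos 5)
Your proof is correct, but it follows a genuinely different route from the paper's. The paper works directly with the defining inequality of semi-convexity: it first proves the bound $|h(x)-h(y)| \le L|x-y|$ for pairs at distance at most $C = 2\sqrt{\|h\|_\infty/c}$ by extending the segment $[x,y]$ to length $C$ and using the boundedness of $h$ at the far endpoint, then chains these local estimates along a subdivision to get global $L$-Lipschitz continuity with $L = 2\sqrt{c\|h\|_\infty}$, and only afterwards reads off the gradient bound from difference quotients. You instead pass to the convex function $\phi = \frac{c}{2}|\cdot|^2 - h$, use the subgradient inequality at a point of differentiability to obtain the one-sided Taylor bound $h(y) \le h(x) + \nabla h(x)\cdot(y-x) + \frac{c}{2}|y-x|^2$, and optimize over the step length $t$; note that your optimal $t = 2\sqrt{\|h\|_\infty/c}$ is exactly the paper's scale $C$, so the two arguments perform the same optimization in different clothing. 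Your version is shorter, makes the origin of the constant $4$ transparent, and delivers the pointwise bound at each differentiability point without first knowing the global Lipschitz constant; the price is that you import standard facts from convex analysis (a finite convex function on $\R^D$ is locally Lipschitz, its gradient at a point of differentiability is a subgradient, and a locally Lipschitz function whose almost-everywhere gradient is bounded by $M$ is globally $M$-Lipschitz), whereas the paper's argument is self-contained modulo Rademacher's theorem. Your explicit passage from essential supremum to supremum via continuity is a point the paper also relies on, only implicitly. Both proofs are valid and yield the same constant.
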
 

\begin{proof}
    Up to replacing $h$ by $-h$, we can assume without loss of generality that $h$ is $c$-semi-convex. We define $C = 2 \sqrt{\frac{\| h\|_\infty}{c}}$ and $L = 2 \sqrt{\| h\|_\infty c}$.
    
    \noindent \emph{Step 1. } We show that for every $x,y \in \R^D$, if $|x-y| \leq C$, then 
    \begin{e*}
        |h(x) - h(y)| \leq L|x-y|.
    \end{e*}

    \noindent Without loss of generality, we may assume that $x \neq y$, define $t = \frac{C}{|x-y|} \in [1,+\infty)$ and set $z = x + t(y-x)$. We have,
    \begin{e*}
        y = \frac{1}{t} z + \left(1 - \frac{1}{t}\right) x.
    \end{e*}
    Note that $t = |z-x|/|y-x|$, by $c$-semi-concavity of $h$, we have 
    \begin{e*}
        h(y) - h(x) \leq \frac{1}{t} \left( h(z) - h(x) \right) + \frac{c}{2} \frac{1}{t} \left(1 - \frac{1}{t} \right)|z-x|^2.
    \end{e*}
    In addition,
    \begin{align*}
        \frac{1}{t} \left(1 - \frac{1}{t} \right)|z-x|^2 &= \frac{1}{t} \left(1 - \frac{1}{t} \right) t^2|y-x|^2 \\
                                                         &= (t-1)|y-x|^2 \\
                                                         &\leq t|y-x|^2 \\
                                                         &=C|x-y|.
    \end{align*}
    So,
    \begin{e*}
        h(y) - h(x) \leq \frac{2\|h\|_\infty}{t} + \frac{c}{2}C|y-x| = L|y-x|.
    \end{e*}
    Finally, since $x$ and $y$ play symmetric roles, we have 
    \begin{e*}
        |h(y) - h(x)| \leq L|y-x|.
    \end{e*}

    \noindent \emph{Step 2.} We show that $h$ is $L$-Lipschitz. 

    \noindent Fix $x,y \in \R^D$ and let $n \in \N^*$ be large enough so that $\frac{|y-x|}{n} \leq C$. For $k \in \{0,\dots,n\}$, define
    \begin{e*}
        x_k = x + \frac{k}{n}(y-x).
    \end{e*}
    We have $|x_{k+1}-x_k| \leq C$, so according to Step 1 the following holds,
    \begin{align*}
        |h(y) - h(x)| &\leq \sum_{k = 0}^{n-1} |h(x_{k+1}) -h(x_k)| \\
                      &\leq \sum_{k = 0}^{n-1} L|x_{k+1} -x_k| \\
                      &= L|x-y|.
    \end{align*}

    \noindent \emph{Step 3.} Conclusion.

    \noindent According to Step 2, $h$ is $L$-Lipschitz on $\R^D$. It follows from Rademacher's theorem that $h$ is differentiable almost everywhere. Let $x \in \R^D$ be a point of differentiability of $h$. For every $\varepsilon > 0$ and $v \in \R^D \setminus \{0\}$, we have 
    \begin{e*}
        \frac{|h(x+\varepsilon v) - h(x)|}{\varepsilon} \leq L|v|.
    \end{e*}
    Letting $\varepsilon \to 0$ and taking the supremum over $v$, it follows that $|\nabla h(x)| \leq L$. Finally, taking the essential supremum over $x \in \R^D$, we obtain 
     \begin{e*}
         \| |\nabla h| \|_\infty  \leq L = 2 \sqrt{c \| h\|_\infty}. \qedhere
     \end{e*}
\end{proof}

\section{The case of null initial condition} \label{s.psi=0}

In this section, we prove Theorem~\ref{t.mainresult} under the additional assumption that $\psi = 0$. In this case, the viscosity solution of \eqref{e.HJ} does not depend on $x$ and satisfies $g(t,x) = tH(0)$. This allows us to give a simple proof of Theorem~\ref{t.mainresult} which we use as a guideline for the proof with general $\psi$ in Section~\ref{s.proof}. This simple proof relies on the fact that thanks to a simple reparametrization, we can assume that $H(0) = 0$ and $\nabla H(0) = 0$.

\begin{proof}[Proof of Theorem~\ref{t.mainresult} when $\psi = 0$]
    Let 
    \begin{e*}
        v(t,x) = f(t,x-t\nabla H(0)) - tH(0),
    \end{e*}
    we have for almost every $(t,x) \in (0,T) \times \R^D$, 
    \begin{e*} 
        \partial_t v - \left( H(\nabla v) - H(0) - \nabla H(0) \cdot \nabla v\right)= 0.
    \end{e*}    
    Therefore, replacing $f$ by $v$ if needed, we can assume without loss of generality that $H(0) = 0$ and $\nabla H(0) = 0$. Under this assumption, we have $g = 0$ and we wish to show that $f = 0$. 
    
    For every $\ell \geq 0$, there exists a constant $c(\ell) > 0$ such that for every $p \in \R^D$ satisfying $|p| \leq \ell$, we have 
    \begin{e*}
        |H(p)| \leq c(\ell)|p|^2.
    \end{e*}
    Since $f$ is jointly Lipschitz on $[0,T] \times \R^D$, there exists $\ell \geq 0$ such that for every $(t,x) \in [0,T] \times \R^D$,
    \begin{e*}
        |f(t,x) - 0| \leq \ell t.
    \end{e*}
    Hence, $f(t,\cdot)$ is bounded and satisfies the hypotheses of Proposition~\ref{p.grad control}. In particular, taking $c = c(\ell)$ for almost every $(t,x) \in [0,T] \times \R^D$, we have 
    \begin{align*}
        |f(t,x)| &\leq \int_0^t |\partial_t f(s,x)|\d s \\
                 &\leq \int_0^t |H(\nabla f(s,x))| \d s \\
                 &\leq c \int_0^t |\nabla f(s,x)|^2 \d s \\
                 &\leq c\int_0^t \||\nabla f(s,\cdot)|\|_\infty^2 \d s \\    
                 &\leq c'\int_0^t \| f(s,\cdot)\|_\infty \d s,
    \end{align*}
    where $c' > 0$ is some constant depending on $\ell$ and the constant of \eqref{i.semiconcave}. Taking the essential supremum over $x \in \R^D$, we discover that for almost every $t \in [0,T)$ we have 
    \begin{e*}
        \| f(t,\cdot) \|_\infty \leq c'(\ell) \int_0^t \| f(s,\cdot)\|_\infty \d s.
    \end{e*}
    Finally, by Lipschitz continuity, the previous display actually holds for every $t \in [0,T)$. Applying Gronwall's lemma \cite[Theorem~2.1]{chicone2024ode}, we deduce that for every $t \in [0,T]$, $ \| f(t,\cdot) \|_\infty = 0$ and so $f = 0$.
\end{proof}

\section{Characteristic curves} \label{s.char}

In this section, we consider $g: [0,T] \times \R^D \to \R$ the viscosity solution of 
\begin{e*}
\begin{cases}
    \partial_t g - H(\nabla g) = 0 &\textrm{ on } (0,T) \times \R^D \\
    g(0,\cdot) = \psi              &\textrm{ on } \R^D.
\end{cases}
\end{e*}
We assume that $g$ satisfies the regularity assumption of Theorem~\ref{t.mainresult}. That is, $g$ is differentiable on $(0,T) \times \R^D$, and there exists $L \geq 0$ such that for every $t \in [0,T)$ and $x,y \in \R^D$,
\begin{e*}
    |\nabla g(t,x) - \nabla g(t,y)| \leq L|x-y|. 
\end{e*}
Under this assumption, we build the characteristic curves associated to $g$. Let $\msf b = - \nabla H(\nabla g)$, by the Picard-Lindelöf theorem, for every $x \in \R^D$, the ordinary differential equation 
    \begin{e*}
        \begin{cases}
             \dot{\varphi}(t) = - \msf b(t,\varphi(t)) \\
             \varphi(0) = x,
        \end{cases}
    \end{e*}
admits a unique strong solution on $[0,T)$, we denote it by $t \mapsto X(t,x)$. For every $t \in [0,T)$, we define $X^t = X(t,\cdot)$.
\begin{proposition} \label{p.bilipschitz flow}
    There exists a constant $c > 0$ such that for every $t \in [0,T)$, the map $X^t : \R^D \to \R^D$ is bijective and $e^{ct}$-Lipschitz. Furthermore, the inverse map $(X^t)^{-1}$ is $e^{ct}$-Lipschitz. 
\end{proposition}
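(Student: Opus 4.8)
The plan is to reduce the whole statement to a Grönwall estimate comparing two characteristic curves, after establishing that the driving field is globally Lipschitz and bounded in space.

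First I would record the relevant regularity of $\msf b = -\nabla H(\nabla g)$. Since $\psi$ is Lipschitz and $H$ is $\mcl C^2$, standard viscosity solution theory (translation invariance of the equation together with the comparison principle) guarantees that $g(t,\cdot)$ is Lipschitz with constant $\mathrm{Lip}(\psi)$, uniformly in $t \in [0,T)$; hence $\nabla g(t,y)$ ranges in the fixed compact ball $K := \bar B(0,\mathrm{Lip}(\psi))$. On $K$ the gradient $\nabla H$ is bounded and Lipschitz with constant $M := \sup_{p \in K}|\nabla^2 H(p)| < +\infty$. Combining this with the hypothesis that $\nabla g(t,\cdot)$ is $L$-Lipschitz, I obtain that $y \mapsto \msf b(t,y)$ is Lipschitz with constant $c := ML$, uniformly in $t \in [0,T)$, and that $\msf b$ is bounded on $[0,T) \times \R^D$.

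Next, for two initial points $x,\td x \in \R^D$, I set $\varphi(s) = X(s,x)$, $\td\varphi(s) = X(s,\td x)$ and $u(s) = |\varphi(s) - \td\varphi(s)|^2$. Differentiating and using the spatial Lipschitz bound on $\msf b$ gives, for almost every $s$,
\[
-2c\,u(s) \le \dot u(s) = -2\bigl(\varphi(s) - \td\varphi(s)\bigr)\cdot\bigl(\msf b(s,\varphi(s)) - \msf b(s,\td\varphi(s))\bigr) \le 2c\,u(s).
\]
Applying Grönwall's lemma to each of these two differential inequalities yields, for every $t \in [0,T)$,
\[
e^{-ct}|x - \td x| \le |X^t(x) - X^t(\td x)| \le e^{ct}|x - \td x|.
\]
The upper bound is precisely the $e^{ct}$-Lipschitz continuity of $X^t$; the lower bound gives injectivity of $X^t$ and shows that the inverse map, defined on the image $X^t(\R^D)$, is $e^{ct}$-Lipschitz.

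It remains to prove surjectivity, which I expect to be the step requiring the most care. Here I would exploit the boundedness of $\msf b$ to integrate the characteristic ODE backwards in time: for a fixed $t \in [0,T)$ and an arbitrary target $z \in \R^D$, boundedness of $\msf b$ precludes finite-time blow-up, so the solution of $\dot\varphi = -\msf b(s,\varphi)$ with terminal condition $\varphi(t) = z$ exists on all of $[0,t]$; setting $x := \varphi(0)$ and invoking uniqueness of the forward flow shows $X^t(x) = z$. (Alternatively, the lower bound above exhibits $X^t$ as a proper injective continuous self-map of $\R^D$, hence a homeomorphism onto $\R^D$ by invariance of domain.) Collecting these facts, $X^t$ is a bijection which is $e^{ct}$-Lipschitz with $e^{ct}$-Lipschitz inverse, for the single constant $c = ML$.
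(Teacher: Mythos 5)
Your proof is correct, and the first half (the two-sided Gr\"onwall estimate on $|X^t(x)-X^t(\td x)|^2$ using the uniform spatial Lipschitz bound on $\msf b$) is exactly the paper's Step~1; your preliminary justification that $\msf b(t,\cdot)$ is Lipschitz uniformly in $t$ and bounded is in fact more explicit than the paper, which simply posits such a constant $c$. Where you genuinely diverge is surjectivity. The paper fixes a target $y$, observes that $|X^t(x)-x|\le t\|\msf b\|_{\infty}$, and applies Brouwer's fixed point theorem to $\Phi(x)=y+x-X^t(x)$ on the compact convex hull of its range; this argument uses only continuity of $X^t$ plus the uniform bound on $x\mapsto x-X^t(x)$, and in particular never needs to run the ODE backwards. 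Your route instead solves the characteristic ODE with terminal condition $\varphi(t)=z$ and invokes forward uniqueness to conclude $X^t(\varphi(0))=z$; this is arguably the more natural argument for a flow of a bounded, spatially Lipschitz field, and it identifies $(X^t)^{-1}$ concretely as the time-reversed flow. The one point to make explicit if you take that route is that the backward Picard--Lindel\"of step requires the same regularity of $\msf b$ in the time variable that the paper already implicitly assumes for the forward flow, so you are not using anything beyond what the paper does. Your parenthetical alternative via invariance of domain also works, but note it needs the lower Gr\"onwall bound not just for injectivity but also for properness (so that the image is closed as well as open, hence all of $\R^D$); it is worth saying that explicitly rather than leaving it as an aside.
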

\begin{proof} We let $c > 0$ be such that for every $t \in [0,T)$ and $x,y \in \R^D$, we have 
    \begin{e*}
       | \msf b(t,x) - \msf b(t,y) | \leq c|x-y|.
    \end{e*}

    \noindent \emph{Step 1.} We show that for every $x,y \in \R^D$, we have 
    \begin{e*}
        e^{-\frac{ct}{2}}|x-y| \leq |X^t(x) - X^t(y)| \leq e^{\frac{ct}{2}}|x-y|.
    \end{e*}

    \noindent Fix $x,y \in \R^D$, we have 
    \begin{align*}
        \frac{\d}{\d t} |X^t(x) - X^t(y)|^2 &= 2 \left( \dot{X}^t(x) - \dot{X}^t(y) \right) \cdot \left( {X}^t(x) - {X}^t(y) \right) \\
                                         &= 2 \big( \msf b(t,{X}^t(x)) - \msf b(t,{X}^t(y)) \big) \cdot \left( {X}^t(x) - {X}^t(y) \right).
    \end{align*}
    By definition of $c$, it follows from the Cauchy-Schwarz inequality that  
    \begin{e*}
        - 2c|X^t(x) - X^t(y)|^2  \leq \frac{\d}{\d t} |X^t(x) - X^t(y)|^2 \leq  2c|X^t(x) - X^t(y)|^2.
    \end{e*}
    Finally, applying Gronwall's lemma \cite[Theorem~2.1]{chicone2024ode}, we deduce that 
    \begin{e*}
        e^{-ct}|x-y| \leq |X^t(x) - X^t(y)| \leq e^{ct}|x-y|.
    \end{e*}
    %
    \noindent \emph{Step 2.} We show that for every $t \in [0,T)$, $X^t : \R^D \to \R^D$ is bijective.

    \noindent Let $t \in [0,T)$, it follows from Step 1 that $X^t$ is injective. We have for every $x \in \R^D$,
    \begin{e*}
        |X(t,x) - x| \leq t \| \msf b \|_{\infty, [0,t] \times \R^D}.
    \end{e*}
    Fix $y \in \R^D$, the previous display implies that the closed convex envelope of the set $\{y + x - X^t(x) \big| x \in \R^D\}$ is convex and compact. We let 
    \begin{e*}
        \mfk C = \overline{\text{conv}} \{y + x - X^t(x) \big| x \in \R^D\}.
    \end{e*}
    According to Step 1, the map 
    \begin{e*}
      \Phi : \begin{cases}
            \mfk C \longrightarrow  \R^D  \\
            x \longmapsto y + x - X^t(x)
        \end{cases}
    \end{e*}
    is continuous. By definition of $\mfk C$, we have $\Phi(\mfk C) \subset \{y + x - X^t(x) \big| x \in \R^D\} \subset \mfk C$. Since $\mfk C$ is convex and compact, by Brouwer's fixed point theorem, there exists $x^* \in \mfk C$ such that $x^*  = \Phi(x^*)$. In particular, we have $y = X^t(x^*)$ and $X^t$ is surjective. 

    \noindent \emph{Step 3.} Conclusion.

    \noindent Let $t \in [0,T)$, we have shown in Step 1 that $X^t$ was $e^{ct}$-Lipschitz and in Step~2 that $X^t$ was bijective. In addition, the inequality 
    \begin{e*}
        e^{ct}|x-y| \leq |X^t(x) - X^t(y)|
    \end{e*}
    is equivalent to 
    \begin{e*}
        |(X^t)^{-1}(x)-(X^t)^{-1}(y)| \leq e^{ct}|x-y|.
    \end{e*}
    Thus $(X^t)^{-1}$ is $e^{ct}$-Lipschitz.
\end{proof}

\begin{proposition}
    For every $t \in [0,T)$ and every Lebesgue negligible set $A \subset \R^D$, the sets $X^t(A)$ and $(X^t)^{-1}(A)$ are also Lebesgue negligible.
\end{proposition}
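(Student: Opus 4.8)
The plan is to reduce the statement to a single elementary fact: any Lipschitz map $\R^D \to \R^D$ sends Lebesgue negligible sets to Lebesgue negligible sets. Indeed, by Proposition~\ref{p.bilipschitz flow}, for each fixed $t \in [0,T)$ both $X^t$ and its inverse $(X^t)^{-1}$ are $e^{ct}$-Lipschitz, so once this fact is established it applies verbatim to each of the two maps and yields that $X^t(A)$ and $(X^t)^{-1}(A)$ are negligible whenever $A$ is. Thus the real content is a standard covering estimate, and the flow structure enters only through the bi-Lipschitz bounds already proved.

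To establish the elementary fact for a $K$-Lipschitz map $\Phi : \R^D \to \R^D$, I would argue as follows. Let $A \subset \R^D$ be negligible and fix $\varepsilon > 0$. By definition of Lebesgue negligibility, there is a countable family of open balls $B_i = B(x_i,r_i)$ covering $A$ whose volumes sum to less than $\varepsilon$. The key observation is that $K$-Lipschitzianity gives the inclusion $\Phi(B_i) \subset B(\Phi(x_i), K r_i)$: indeed, for every $y \in B_i$ we have $|\Phi(y) - \Phi(x_i)| \leq K|y - x_i| < K r_i$. Since the volume of $B(\Phi(x_i), K r_i)$ equals $K^D$ times the volume of $B_i$, summing over $i$ shows that $\Phi(A) \subset \bigcup_i \Phi(B_i)$ is covered by balls of total volume at most $K^D \varepsilon$. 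As $\varepsilon > 0$ was arbitrary, the Lebesgue outer measure of $\Phi(A)$ vanishes, so $\Phi(A)$ is negligible.

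I expect no genuine obstacle in this argument: it is routine measure theory, and the only point requiring a moment of care is the clean choice to cover $A$ by balls centered at the $x_i$ and to map each center by $\Phi$, which makes the radius scale exactly by $K$ and the volume by $K^D$ with no stray dimensional constant. Applying the fact with $\Phi = X^t$ gives that $X^t(A)$ is negligible, and applying it with $\Phi = (X^t)^{-1}$ gives that $(X^t)^{-1}(A)$ is negligible, both maps being $e^{ct}$-Lipschitz by Proposition~\ref{p.bilipschitz flow}. This completes the proof.
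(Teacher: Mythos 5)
Your proof is correct and takes essentially the same route as the paper: a covering argument combined with the Lipschitz scaling of radii, applied to each of the two maps $X^t$ and $(X^t)^{-1}$ separately. The only cosmetic difference is that you cover by Euclidean balls where the paper uses cubes (which is why the paper picks up an extra $\sqrt{D}$ from comparing the sup norm with the Euclidean norm); this changes nothing of substance.
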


\begin{proof}
    We prove that $(X^t)^{-1}(A)$ is Lebesgue negligible only using the fact that $(X^t)^{-1}$ is Lipschitz, a similar proof would yield that $X^t(A)$ is negligible. We let $\mcl L^n$ denote the $n$-dimensional Lebesgue measure. We say that a subset of the form $Q(y_0,r) = y_0 + [-r,r]^D$ with $y_0 \in \R^D$ and $r \geq 0$ is a cube. Note that by translation invariance of the Lebesgue measure, $\mcl L^D(Q(y_0,r))$ does not depend on $y_0$. We let $c > 0$ denote the constant appearing in Proposition~\ref{p.bilipschitz flow}.

    \noindent \emph{Step 1.} We show that for every $t \in [0,T)$ and every cube $Q \subset \R^D$, we have 
    \begin{e*}
        \mcl L^D \left( (X^t)^{-1}(Q)\right) \leq \left(\sqrt{D}e^{ct}\right)^D \mcl L^D(Q).
    \end{e*}

    \noindent  Let $y_0 \in \R^D$ and $r \geq 0$ such that $Q = Q(y_0,r)$. We let $|\cdot|_\infty$ denote the sup norm on $\R^D$. According to Proposition~\ref{p.bilipschitz flow}, there exists $c > 0$ such that $X^t$ is $e^{ct}$-Lipschitz with respect to $|\cdot|$. We have for every $y \in Q$,
    \begin{align*}
        |(X^t)^{-1}(y)- (X^t)^{-1}(y_0)|_\infty &\leq |(X^t)^{-1}(y)- (X^t)^{-1}(y_0)| \\
                                                &\leq e^{ct} |y-y_0| \\
                                                &\leq e^{ct} \sqrt{D} |y-y_0|_\infty \\
                                                &\leq e^{ct} \sqrt{D}r.
    \end{align*}
    Thus, $(X^t)^{-1}(Q) \subset Q((X^t)^{-1}(y_0),e^{ct} \sqrt{D}r)$. It follows that 
    \begin{align*}
        \mcl L^D \left( (X^t)^{-1}(Q) \right) &\leq \mcl L^D \left( Q((X^t)^{-1}(y_0),e^{ct} \sqrt{D}r) \right) \\
                                              &= \left(e^{ct} \sqrt{D}\right)^D \mcl L^D(Q((X^t)^{-1}(y_0),r)) \\
                                              &= \left(e^{ct} \sqrt{D}\right)^D \mcl L^D(Q(y_0,r)).
    \end{align*}

    \noindent \emph{Step 2.} We show that for every $t \in [0,T)$ and every $\mcl L^D$-negligible set $A \subset \R^D$, the set $(X^t)^{-1}(A)$ is also $\mcl L^D$-negligible.

    \noindent By definition of the Lebesgue measure, for every $\varepsilon > 0$ there exists a sequence of cubes $(Q_n)_{n \geq 1}$ such that
    \begin{e*}
        A \subset \bigcup_{n = 1}^\infty Q_n, 
    \end{e*}
    and $\sum_{n = 1}^\infty \mcl L^D(Q_n) \leq \varepsilon$. According to Step 1, for every $n \geq 1$ we have 
    \begin{e*}
        \mcl L^D((X^t)^{-1}(Q_n)) \leq (\sqrt{D}e^{ct})^D \mcl L^D(Q_n).
    \end{e*}
    Therefore,
    \begin{align*}
        \mcl L^D((X^t)^{-1}(A)) &\leq \mcl L^D \left( \bigcup_{n = 1}^\infty (X^t)^{-1}(Q_n)  \right) \\
                                &\leq \sum_{n = 1}^\infty  \mcl L^D \left( (X^t)^{-1}(Q_n)  \right) \\
                                &\leq \sum_{n = 1}^\infty (\sqrt{D}e^{ct})^D \mcl L^D(Q_n) \\
                                & \leq (\sqrt{D}e^{ct})^D \varepsilon.
    \end{align*}
    Letting $\varepsilon \to 0$, we obtain that $\mcl L^D((X^t)^{-1}(A)) = 0$.
\end{proof}

We define the inverse flow map $Y : [0,T) \times \R^D \to [0,T) \times \R^D$ by 
\begin{e*}
    Y(t,x) = (t,(X^t)^{-1}(x)).
\end{e*}
According to Proposition~\ref{p.bilipschitz flow}, $Y$ is bijective, we let $Y^{-1}$ denote its inverse.

\begin{proposition} \label{p. space-time flow is absolutely continuous}
    For every Lebesgue negligible set $N \subset [0,T) \times \R^D$, the sets $Y^{-1}(N)$ and $Y(N)$ are Lebesgue negligible.
\end{proposition}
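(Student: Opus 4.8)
The plan is to combine the preceding proposition (preservation of null sets by the spatial maps $X^t$ and $(X^t)^{-1}$) with Fubini's theorem, using crucially that $Y$ and $Y^{-1}$ leave the time coordinate untouched. For a set $E \subseteq [0,T) \times \R^D$ and $t \in [0,T)$, write $E_t = \{ x \in \R^D : (t,x) \in E \}$ for its time-slice. Since $Y(t,x) = (t,(X^t)^{-1}(x))$ and $Y^{-1}(t,x) = (t, X^t(x))$, the slices of the images are computed fiberwise,
\begin{e*}
    (Y(N))_t = (X^t)^{-1}(N_t), \qquad (Y^{-1}(N))_t = X^t(N_t).
\end{e*}
Given a negligible $N \subseteq [0,T) \times \R^D$, Fubini's theorem yields that $N_t$ is $\mcl L^D$-negligible for almost every $t$, and the previous proposition then shows that each slice $(Y(N))_t$ and $(Y^{-1}(N))_t$ is $\mcl L^D$-negligible for almost every $t$. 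If the images were known to be measurable, Tonelli's theorem would immediately give $\mcl L^{D+1}(Y(N)) = \int_0^T \mcl L^D((Y(N))_t)\,\d t = 0$, and similarly for $Y^{-1}(N)$.

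The main obstacle is precisely this measurability issue: having almost every slice negligible does \emph{not} by itself force a set to be negligible, since non-measurable sets with null slices exist. To resolve it, I would first establish that $Y$ and $Y^{-1}$ are homeomorphisms of $[0,T)\times\R^D$. Continuity of $Y^{-1}(t,x) = (t,X(t,x))$ is the standard continuous dependence of ODE solutions on time and initial data. For continuity of $Y$ it suffices to control the inverse flow $Z(t,x) = (X^t)^{-1}(x)$ in time: comparing $X(t,Z(s,x))$ with $x = X(t,Z(t,x))$ and using $|\dr_t X| \leq \|\msf b\|_{\infty,[0,T']\times\R^D}$ together with the lower bound $|X^t(a)-X^t(b)| \geq e^{-ct}|a-b|$ from Proposition~\ref{p.bilipschitz flow}, gives a local-in-time Lipschitz estimate
\begin{e*}
    |Z(t,x) - Z(s,x)| \leq e^{cT'}\,\|\msf b\|_{\infty,[0,T']\times\R^D}\,|t-s|
\end{e*}
for $s,t \in [0,T']$ with $T' < T$. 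Combined with the spatial $e^{ct}$-Lipschitz bound on $(X^t)^{-1}$, this makes $Z$ jointly continuous, hence $Y$ continuous. As homeomorphisms, $Y$ and $Y^{-1}$ map Borel sets to Borel sets.

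With measurability secured, I would conclude as follows. Given the negligible set $N$, enlarge it to a Borel negligible set $\td N \supseteq N$ (for instance a $G_\delta$ of measure zero). Then $Y(\td N)$ is Borel, and the slice computation together with Fubini's theorem and the previous proposition give $\mcl L^{D+1}(Y(\td N)) = 0$; since $Y(N) \subseteq Y(\td N)$, completeness of the Lebesgue measure forces $Y(N)$ to be negligible. The identical argument applied to $Y^{-1}$ and $\td N$ shows $Y^{-1}(N)$ is negligible. I expect the genuinely delicate point to be the justification of measurability, equivalently the joint regularity of the inverse flow $Z$. Indeed the joint Lipschitz estimate above would even allow one to bypass Fubini altogether and argue directly, as in the previous two propositions via a cube-covering estimate, that the locally Lipschitz maps $Y$ and $Y^{-1}$ send null sets to null sets, after exhausting $[0,T)$ by the intervals $[0,T']$ and taking a countable union.
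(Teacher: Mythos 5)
Your proof follows essentially the same route as the paper's: slice $N$ in time, use Fubini to see that almost every slice $N_t$ is $\mcl L^D$-null, push each slice through $X^t$ or $(X^t)^{-1}$ via the preceding proposition, and integrate back. The only substantive addition is your treatment of measurability (enlarging $N$ to a Borel null set and checking that $Y$ and $Y^{-1}$ are homeomorphisms so that Borel sets are preserved); the paper applies Tonelli to $\mathbf{1}_{Y^{-1}(N)}$ without comment, so your extra care tightens a point the paper leaves implicit rather than taking a genuinely different approach.
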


\begin{proof}

    Let $N \subset [0,T) \times \R^D$ such that $\mcl L^{D+1}(N) = 0$. We prove that $Y^{-1}(N)$ is Lebesgue negligible, a similar proof would also yield that $Y(N)$ is Lebesgue negligible. For every $t \in [0,T)$, define 
    \begin{e*}
        N_t = \{ x \in \R^D \big| (t,x) \in N \}.
    \end{e*}
    We have 
    \begin{e*}
        \int_0^T \mcl L^D(N_t) \d t = \mcl L^{D+1}(N) = 0,
    \end{e*}
    so for almost all $t \in [0,T)$, $\mcl L^D(N_t) = 0$. It follows from Step 2 that for almost all $t \in [0,T)$, $\mcl L^D((X^t)^{-1}(N_t)) = 0$ and thus 
    \begin{align*}
        \mcl L^{D + 1} (Y^{-1}(N)) &= \int_{[0,T) \times \R^D} \mathbf{1}_{Y^{-1}(N)}(t,x) \d x \d t  \\
                              &= \int_0^T \int_{\R^D} \mathbf{1}_{N}(t,(X^t)^{-1}(x)) \d x \d t \\
                              &= \int_0^T \int_{\R^D} \mathbf{1}_{(X^t)^{-1}(N_t)}(x) \d x \d t \\
                              &= \int_0^T \mcl L^D((X^t)^{-1}(N_t)) \d t \\
                              &= 0. \qedhere                      
    \end{align*}
\end{proof}

For the sake of completeness, we finish this section by proving the following proposition, which states that when $\nabla g$ is locally jointly Lipschitz, $\nabla g$ is constant along the flow of solution $Y^{-1}$ and that the characteristic curves are in fact straight lines. We will not use this result in the proof of Theorem~\ref{t.mainresult}, but it gives us a hint on how to generalize the construction of the function $v$ appearing in Section~\ref{s.psi=0}.

\begin{proposition} \label{p.constant gradient along char}
   Assume that for every compact subset $K \subset [0,T) \times \R^D$, $\nabla g$ is Lipschitz on $K$. Then, for every $x \in \R^D$, the function $t \mapsto \nabla g(t,X(t,x))$ is constant on $[0,T)$ and for every $t \in [0,T)$, we have 
    \begin{e*}
        X(t,x) = x - t \nabla H(\nabla \psi(x)).
    \end{e*}
\end{proposition}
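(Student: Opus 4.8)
The plan is to show that $t \mapsto \nabla g(t,X(t,x))$ is constant for almost every $x$, upgrade this to every $x$ by continuity, and then integrate the characteristic ODE to obtain the explicit formula for $X$. Throughout I use that, being a viscosity solution differentiable on $(0,T)\times\R^D$, $g$ satisfies $\partial_t g = H(\nabla g)$ at every point of $(0,T)\times\R^D$, and that the characteristic $t\mapsto X(t,x)$ satisfies $\dot X(t,x) = -\nabla H(\nabla g(t,X(t,x)))$ with $X(0,x)=x$. The heuristic computation is the classical one: writing $\nabla^2 g$ for the spatial Hessian, differentiating the equation in space and using symmetry of the mixed partials gives $\partial_t \nabla g = \nabla^2 g\,\nabla H(\nabla g)$, whence

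\begin{e*}
\frac{\d}{\d t}\nabla g(t,X(t,x)) = \partial_t\nabla g + \nabla^2 g\,\dot X = \nabla^2 g\,\nabla H(\nabla g) - \nabla^2 g\,\nabla H(\nabla g) = 0 .
\end{e*}

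The difficulty, and the only real obstacle, is that under the present hypothesis $\nabla g$ is merely locally Lipschitz, so $g$ is only $C^{1,1}_{\mathrm{loc}}$ and its second derivatives --- hence the identity above --- exist only almost everywhere. Let $B \subset [0,T)\times\R^D$ be the set of points at which $\nabla g$ fails to be differentiable or at which the second partials of $g$ fail to be symmetric; by Rademacher's theorem and the fact that $C^{1,1}_{\mathrm{loc}}$ functions lie in $W^{2,\infty}_{\mathrm{loc}}$ (so that their distributional, hence almost everywhere classical, second derivatives are symmetric), the set $B$ is Lebesgue negligible. The key point is that the inverse flow map satisfies $Y^{-1}(t,x) = (t,X(t,x))$, so $(t,X(t,x))\in B$ if and only if $(t,x)\in Y(B)$. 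By Proposition~\ref{p. space-time flow is absolutely continuous}, $Y(B)$ is negligible, and Fubini's theorem then yields that for almost every $x\in\R^D$ one has $(t,X(t,x))\notin B$ for almost every $t\in[0,T)$.

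Fix such an $x$. On any interval $[0,t_0]\subset[0,T)$ the curve $t\mapsto(t,X(t,x))$ stays in a compact subset of $[0,T)\times\R^D$ on which $\nabla g$ is Lipschitz, and the curve is itself Lipschitz because $\dot X = -\nabla H(\nabla g)$ is bounded there; hence $t\mapsto \nabla g(t,X(t,x))$ is Lipschitz, in particular absolutely continuous. At almost every $t$ the point $(t,X(t,x))$ lies outside $B$, so $\nabla g$ is differentiable there and the mixed partials are symmetric; applying the chain rule together with the differentiated equation exactly as above shows that the derivative of $t\mapsto\nabla g(t,X(t,x))$ vanishes for almost every $t$. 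An absolutely continuous function with almost everywhere vanishing derivative is constant, so $\nabla g(t,X(t,x)) = \nabla g(0,x) = \nabla\psi(x)$ for all $t\in[0,T)$, for almost every $x$.

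It remains to pass from almost every $x$ to every $x$ and to deduce the formula for $X$. For each fixed $t$, both sides of the identity $\nabla g(t,X(t,x)) = \nabla\psi(x)$ are continuous in $x$ --- the left side because $X(t,\cdot)$ is Lipschitz by Proposition~\ref{p.bilipschitz flow} and $\nabla g(t,\cdot)$ is continuous, the right side because $\nabla\psi$ is Lipschitz --- so an equality valid for almost every $x$ is valid for every $x$ by density. Consequently, for every $x$, the map $t\mapsto\nabla g(t,X(t,x))$ is constant equal to $\nabla\psi(x)$, which is the first assertion. Finally, substituting this into the characteristic ODE gives $\dot X(t,x) = -\nabla H(\nabla\psi(x))$, which is constant in $t$; integrating from $0$ and using $X(0,x)=x$ yields $X(t,x) = x - t\,\nabla H(\nabla\psi(x))$, as claimed. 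The main obstacle was the low regularity of $\nabla g$, circumvented precisely by the absolute continuity of the flow established in Proposition~\ref{p. space-time flow is absolutely continuous}.
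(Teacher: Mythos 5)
Your proposal is correct and follows essentially the same route as the paper: differentiate the equation in space to get $\partial_t\nabla g=\nabla^2 g\,\nabla H(\nabla g)$ almost everywhere, push the negligible bad set through the flow via Proposition~\ref{p. space-time flow is absolutely continuous} and Fubini, integrate the absolutely continuous map $t\mapsto\nabla g(t,X^t(x))$ for almost every $x$, upgrade to every $x$ by continuity, and then integrate the characteristic ODE. The only difference is that you make explicit the almost-everywhere symmetry of the second derivatives of a $C^{1,1}_{\mathrm{loc}}$ function, a point the paper leaves implicit.
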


\begin{proof}
    The function $\nabla g$ is locally Lipschitz on $(0,T) \times \R^D$ so it is differentiable almost everywhere on $(0,T) \times \R^D$, differentiating \eqref{e.HJ} with respect to $x$, we have for almost all $(t,x) \in (0,T) \times \R^D$,
    \begin{e*}
        \partial_t \nabla g(t,x) = \nabla^2 g(t,x) \nabla H(\nabla g(t,x)).
    \end{e*}
    Furthermore, according to Proposition~\ref{p. space-time flow is absolutely continuous}, for almost every $(t,x) \in (0,T) \times \R^D$, $\nabla g$ is differentiable at $(t,X^t(x))$. Using the previous display, we discover that for almost all $(t,x) \in (0,T) \times \R^D$,
    \begin{align*}
        \frac{\d}{\d t} \nabla g(t,X^t(x)) &= \partial_t \nabla g(t,X^t(x)) + \nabla^2g(t,X^t(x)) \dot{X}(t,x) \\
                                           &= \left( \partial_t \nabla g - \nabla^2g \nabla H(\nabla g)\right)(t,X^t(x)) \\
                                           &= 0.
    \end{align*}
    The function $t \mapsto \nabla g(t,X^t(x))$ is absolutely continuous as the composition of two Lipschitz continuous functions. Thus, we can integrate the previous display with respect to $t$ and obtain that for almost all $x \in \R^D$, we have for all $t \in (0,T)$, $\nabla g(t,X^t(x)) = \nabla \psi(x)$. Finally, since $x \mapsto \nabla g(t,X^t(x))$ and $x \mapsto \nabla \psi(x)$ are continuous on $\R^D$ we have as desired for every $(t,x) \in (0,T) \times \R^D$,
    \begin{e*}
        \nabla g(t,X^t(x)) = \nabla \psi(x).
    \end{e*}
    In particular, since we have $\dot{X}^t(x) = - \nabla H(\nabla g(t,X^t(x))$, we discover that $\dot{X}(t,x) = -\nabla H(\nabla \psi(x))$ and thus 
    \begin{e*}
        X(t,x) = x - t \nabla H(\nabla \psi(x)).
    \end{e*}
\end{proof}

As a consequence of Proposition~\ref{p.constant gradient along char}, we see that the function $v$ of Section~\ref{s.psi=0} is in fact the function $(t,x) \mapsto (f-g)(t,X^t(x))$. This fact will be our guiding rail to prove Theorem~\ref{t.mainresult}.

\section{Proof of the main result} \label{s.proof}
 
In this section, we adapt the argument of Section~\ref{s.psi=0} using the results of Section~\ref{s.char} to prove Theorem~\ref{t.mainresult}. In what follows, we fix $H : \R^D \to \R$ a $\mcl C^2$ function and $\psi : \R^D \to \R$ a Lipschitz and differentiable function with Lipschitz gradient. We also assume that $g$ the viscosity solution of \eqref{e.HJ} is differentiable, and that there exists $L \in \R$ such that for every $t \in [0,T)$ and every $x,y \in \R^D$,
\begin{e*}
    |\nabla g(t,x) - \nabla g(t,y)| \leq L|x-y|. 
\end{e*}
%


\begin{lemma} \label{l. control on the difference}
Assume the hypotheses of Theorem~\ref{t.mainresult} and let $u = f-g$. There exists a constant $c > 0$ such that for almost all $x \in \R^D$, for all $t \in [0,T)$,
\begin{e*}
    |u(t,X^t(x))| \leq c\int_0^t |\nabla u(s,X^s(x))|^2  \d s.
\end{e*}
\end{lemma}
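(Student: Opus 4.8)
The plan is to track the difference $u = f - g$ along the characteristic curves, mirroring the reparametrised computation carried out in Section~\ref{s.psi=0}. Set $w(t,x) = u(t,X^t(x))$. Since $f(0,\cdot)=g(0,\cdot)=\psi$ and $X^0(x)=x$, we have $w(0,x)=0$, so the lemma reduces to controlling $\frac{\d}{\d t}w$ and integrating in time. Formally, differentiating along the characteristic and using that both $f$ and $g$ solve \eqref{e.HJ} together with $\dot X^t(x) = -\nabla H(\nabla g(t,X^t(x)))$ gives
\[
    \frac{\d}{\d t}w = \big(H(\nabla f) - H(\nabla g) - \nabla H(\nabla g)\cdot(\nabla f-\nabla g)\big)(t,X^t(x)),
\]
which is exactly the first-order Taylor remainder of $H$ at $\nabla g$ evaluated at $\nabla f$, namely $\tfrac12(\nabla u)^{\tsp}\nabla^2H(\xi)\,\nabla u$ for some $\xi$ on the segment joining $\nabla g$ and $\nabla f$. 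As $f$ and $g$ are Lipschitz, $\nabla f$ and $\nabla g$ stay in a fixed bounded set, so the $\mcl C^2$ regularity of $H$ bounds $\nabla^2 H$ there and yields $|\frac{\d}{\d t}w|\le c\,|\nabla u(t,X^t(x))|^2$; integrating from $0$ to $t$ then produces the claimed inequality.

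The main task is to make this computation rigorous, since $u$ is only differentiable almost everywhere and $f$ solves \eqref{e.HJ} only almost everywhere. First I would note that $s\mapsto X^s(x)$ is Lipschitz in $s$, because its velocity $-\nabla H(\nabla g)$ is bounded ($\nabla g$ is bounded and $H\in\mcl C^2$). Hence $s\mapsto w(s,x)=u(s,X^s(x))$ is a composition of Lipschitz maps, therefore absolutely continuous, and equals the integral of its almost-everywhere derivative. Wherever $u$ is differentiable at the point $(s,X^s(x))$ and $s\mapsto X^s(x)$ is differentiable at $s$, the chain rule gives $\frac{\d}{\d s}w(s,x)=\partial_t u(s,X^s(x))+\nabla u(s,X^s(x))\cdot\dot X^s(x)$, and at every such $s$ where moreover $f$ solves \eqref{e.HJ} at $(s,X^s(x))$ this equals the Taylor remainder above.

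The crux, and the step I expect to be the main obstacle, is to guarantee that for almost every $x$ the curve $s\mapsto(s,X^s(x))$ meets, for almost every $s$, the full-measure set on which $u$ is differentiable and $f$ solves \eqref{e.HJ}. Let $B\subset(0,T)\times\R^D$ be the Lebesgue-null set of points where $u$ fails to be differentiable or $f$ fails to satisfy \eqref{e.HJ}; note that $g$ is everywhere differentiable and solves the equation classically, so only $f$ contributes to $B$. The space-time characteristic map is $\Psi(s,x)=(s,X^s(x))=Y^{-1}(s,x)$, so the set of bad parameters is $\{(s,x):(s,X^s(x))\in B\}=Y(B)$, which is Lebesgue-null by Proposition~\ref{p. space-time flow is absolutely continuous}. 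By Fubini's theorem, for almost every $x$ the slice $\{s:(s,X^s(x))\in B\}$ is $\mcl L^1$-null, so for such $x$ the identity $\frac{\d}{\d s}w(s,x)=\tfrac12(\nabla u)^{\tsp}\nabla^2H(\xi)\nabla u$ holds for almost every $s$, giving $|\frac{\d}{\d s}w(s,x)|\le c\,|\nabla u(s,X^s(x))|^2$. Integrating from $0$ to $t$ and using $w(0,x)=0$ yields the inequality for almost every $x$ and almost every $t$; since both sides are continuous in $t$, it extends to every $t\in[0,T)$, as required.
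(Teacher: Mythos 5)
Your proposal is correct and follows essentially the same route as the paper: differentiate $u(s,X^s(x))$ along characteristics, identify the right-hand side as the first-order Taylor remainder of $H$ at $\nabla g$, bound it by $c|\nabla u|^2$ using the boundedness of the gradients, and invoke Proposition~\ref{p. space-time flow is absolutely continuous} (with Fubini) to ensure the characteristic curve avoids the bad set for almost every $x$. Your treatment of the measure-theoretic step is in fact slightly more explicit than the paper's.
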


\begin{proof} 
    Given a matrix $A \in \R^{D \times D}$, we define $|A|_{\text{op}} = \sup_{|x| \leq 1} x \cdot Ax$. Let $L \geq 0$ be such that for every $t \in [0,T)$, $f(t,\cdot)$ and $g(t,\cdot)$ are $L$-Lipschitz, we set
    \begin{e*} 
        c = \frac{1}{2} \sup_{ |r| \leq 2L} |\nabla^2 H(r)|_{\text{op}}.
    \end{e*}
    For every $p,q \in \R^D$ such that $|p|,|q| \leq L$, we have 
    \begin{e*}
        |H(p+q) - H(p) - q \cdot \nabla H(q)| \leq c |q|^2.
    \end{e*}
    By Rademacher's theorem, $f$ is differentiable almost everywhere on $(0,T) \times \R^D$. Thus, according to Proposition~\ref{p. space-time flow is absolutely continuous}, we have that for almost all $(t,x) \in (0,T) \times \R^D$, $f$ is differentiable at $(t,X^t(x))$. Fix $x \in \R^D$ such that, for almost all $t \in (0,T)$, $f$ is differentiable at $(t,X^t(x))$. At every point of differentiability $s \in (0,T)$, we have 
    \begin{align*}
        \frac{\d}{\d s} u(s,X^s(x)) &= \partial_t u(s,X^s(x)) + \dot{X}(s,x) \cdot \nabla u(s,X^s(x)) \\
                                    &= \left(H(\nabla f) - H(\nabla g) - \nabla H(\nabla g) \cdot \nabla u \right)(s,X^s(x)) \\
                                    &= \left( H(\nabla g + \nabla u) - H(\nabla g) - \nabla H(\nabla g) \cdot \nabla u \right)(s,X^s(x)).
    \end{align*}
    The function $t \mapsto u(t,X^t(x))$ is Lipschitz so for every $t \in [0,T)$, we have 
    \begin{e*}
        u(t,X^t(x)) = \int_0^t \frac{\d}{\d s} u(s,X^t(x)) \d s.
    \end{e*}
    Hence, for all $t \in [0,T)$, 
    \begin{align*}
        |u(t,X^t(x))| &\leq \int_0^t \left| \frac{\d}{\d s} u(s,X^t(x)) \right| \d s \\
                      &= \int_0^t |\left( H(\nabla u + \nabla g) - H(\nabla g) - \nabla H(\nabla g) \cdot \nabla u \right)(s,X^s(x))| \d s \\
                      &\leq c\int_0^t|\nabla u(s,X^s(x))|^2 \d s \\ 
    \end{align*}
    This is the desired result.    
\end{proof}

\begin{proof}[Proof of Theorem~\ref{t.mainresult}]
    By definition, we have that $g$ the viscosity solution of \eqref{e.HJ} is a Lipschitz function on $[0,T] \times \R^D$ satisfying $g(0,\cdot) = \psi$. According to \cite[Section~10~Theorem~1]{evans}, for all $(t,x) \in (0,T) \times \R^D$, we have 
    \begin{e*}
        \partial_t g(t,x) - H(\nabla g(t,x)) = 0. 
    \end{e*}
    In addition, since for every $t \in [0,T)$, $\nabla g(t,\cdot)$ is $L$-Lipschitz, it follows that for every $x,y \in \R^D$,
    \begin{e*}
        (\nabla g(t,x) - \nabla g(t,y) ) \cdot (x-y) \leq L|x-y|^2.
    \end{e*}
    The previous display implies that $g(t,\cdot)$ is $L$-semi-concave and thus $g$ is a Lipschitz function satisfying \eqref{i.initial}, \eqref{i.aesol} and \eqref{i.semiconcave} in Theorem~\ref{t.mainresult}. Let us now prove that $g$ is the unique Lipschitz function satisfying \eqref{i.initial}, \eqref{i.aesol} and \eqref{i.semiconcave} in Theorem~\ref{t.mainresult}. To do so, we will prove that the function $u = f-g$ is the null function. Let us mention that a similar argument than the one that yielded the $L$-semi-concavity of $g(t,\cdot)$ can be used to prove that $g(t,\cdot)$ is $L$-semi-convex, this means in particular that the function $u(t,\cdot)$ is semi-concave.
    
    Let $c > 0$ be the constant appearing in Lemma~\ref{l. control on the difference}. According to Proposition~\ref{p. space-time flow is absolutely continuous}, for almost all $(t,y) \in \R^D$, Lemma~\ref{l. control on the difference} holds at $(t,(X^t)^{-1}(y))$, thus 
    \begin{e*}
        |u(t,y)| \leq c\int_0^t |\nabla u(s,X^s((X^t)^{-1}(y))|^2 \d s.
    \end{e*}
    Taking the essential supremum over $y \in \R^D$, we obtain that for all $t \in [0,T)$, 
    \begin{e*}
        \|u(t,\cdot)\|_{\infty} \leq c\int_0^t \| |\nabla u(s,\cdot)| \|^2_\infty \d s.
    \end{e*}
    Finally, appealing to Proposition~\ref{p.grad control} we obtain for all $t \in [0,T)$,
    \begin{e*}
        \|u(t,\cdot)\|_{\infty} \leq c'\int_0^t \| u(s,\cdot) \|_\infty \d s,
    \end{e*}
    for some constant $c' > 0$. It then follows by Gronwall's lemma \cite[Theorem~2.1]{chicone2024ode} that for every $t \in [0,T)$, $\|u(t,\cdot)\|_{\infty} = 0$ and thus $f = g$ on $[0,T] \times \R^D$.
\end{proof}

\newpage

\small
\bibliographystyle{plain}
\bibliography{ref}

\end{document}